\title{A Rokhlin Lemma for Noninvertible Totally-Ordered Measure-Preserving Dynamical Systems}
\author{Adam Erickson}
\date{}
 \newtheorem{theorem}{Theorem}
  \newtheorem{proposition}[theorem]{Proposition}
 \newtheoremstyle{mydefstyle}%
     {}%
     {}%
     {}%
     {}%
     {\bf}%
     {}%
     { }%
     {\thmname{#1}\thmnumber{ #2}%
       \thmnote{: #3\addcontentsline{toc}{subsection}{#1 #2: {\it#3}}}. }
\theoremstyle{mydefstyle}
\newenvironment{definition}
   {\pushQED{\qed}\definitionx}
   {\popQED\enddefinitionx}
 \newenvironment{example}
   {\pushQED{\qed}\examplex}
   {\popQED\endexamplex}
\newcommand{\N}{\mathbb{N}}
\newcommand{\Q}{\mathbb{Q}}
\newcommand{\Z}{\mathbb{Z}}
\newcommand{\p}[1]{\left(#1\right)}
\newcommand{\sq}[1]{\left[#1\right]}
\newcommand{\set}[1]{\left\{#1\right\}}
\newcommand{\genset}[1]{\left\langle#1\right\rangle}
\newcommand{\abs}[1]{\left|#1\right|}
\newcommand{\defeq}{\overset{\text{def}}{=}}
\begin{document}
\maketitle

\begin{abstract}
Let $(X,\mathcal{F},\mu,T)$ be a not necessarily invertible
non-atomic measure-preserving dynamical system where the
$\sigma$-algebra $\mathcal{F}$ is generated by the intervals according to some total order.
The main result is that the classical Rokhlin lemma may be adapted to such a situation assuming
a slight extension of aperiodicity. This result is compared to previous noninvertible versions of 
the Rokhlin lemma.
\end{abstract}

\section{Introduction}
Rokhlin's lemma traces its history to the work of Vladimir Abramovitch Rokhlin, who utilized it in
studies of generators in ergodic theory \cite{Rokhlin63}. In its original context, it is a statement that in a
measure-preserving dynamical system $(X,\mathcal{F},\mu,T)$ with $T$ surjective, aperiodic, and invertible,
for any $n \in \N$ and $\varepsilon>0$, it is possible to find a
measurable subset $E$ such that each of the sets $T^j(E)$ for $j=0,\cdots,n-1$ are disjoint and these together cover all
but a subset of measure at most $\varepsilon$ of the space $X$; we say that such an $E$ induces
a $(n,\varepsilon)$-Rokhlin tower. For nearly 40 years, the Rokhlin lemma took this form
in most treatments (see \cite[pp.\ 71-72]{Halmos56}, \cite[p.\ 158]{Brown76}) but it entered the mathematical folklore that
minor adjustments to the proof would permit it to be applied to a non-invertible context.
 
A published proof of a Rokhlin lemma for non-invertible endomorphisms
does not seem to have manifested until 2000, when 
Stefan-Maria Heinemann and Oliver Schmitt published a proof in the context when $X$ is a separable metric space
with $T$ measure preserving, aperiodic and surjective (but not necessarily invertible or even forward-measurable) that 
there exists a set $E$ inducing an $(n,\varepsilon)$-Rokhlin tower, in the sense that $\mu(E \cap T^{-j} E) = 0$ for all
$j=1,2,\cdots,n-1$ and the sets $T^{-j} E$ cover all but a measure $\varepsilon$ subset of $X$.

We wish to highlight the flexibility of the proof strategy utilized by Heinemann and Schmitt 
in \cite{Heinemann00} by adapting the Rokhlin lemma
to a similar but distinct setting, that of systems generated by the intervals according to some total ordering with
non-invertible dynamics, in the hopes that this will inspire further investigation into conditions and contexts where
other analogous versions of Rokhlin's lemma hold.

In section \ref{sec3} we compare the variants of Rokhlin's lemma produced by Heinemann and Schmitt and which we prove in this paper,
and also compare our result to a similar but distinct version of Rokhlin's lemma
obtained by Avila and Candela \cite{Avila16} which applies to $\N^d$-actions
on (nonatomic) standard probability spaces.

For those so inspired we 
also suggest Avila and Bochi \cite{Avila06} which contains a distinct approach from \cite{Heinemann00} in proving
an interesting variation of the Rokhlin lemma for compact manifolds with $C^1$-endomorphism dynamics against a normalized Lebesgue measure
(which is not necessarily preserved by the dynamics).

\section{Rokhlin lemma}

\begin{definition}
Suppose $X$ is totally ordered by $<$. We define an \textbf{interval} in $X$ to be any set $J \subseteq X$ such that if $a , c \in J$, and 
$a < b < c$ for some $b \in X$, then $b \in J$ also.
\end{definition}
We utilize standard interval notation; in particular, we use the symbols $-\infty$ and $\infty$ to denote the absence of a lower bound
or upper bound. We also introduce the notation $\genset{a,b}$ to denote the closed interval between $a$ and $b$ regardless of their
relative order; that is, $\genset{a,b} = [ \min(a,b) , \max(a,b) ]$. When $a=b$, we let it be a singleton set $\genset{a,a} = \set{a}$.

\begin{definition}
Let $\Sigma = (X,\mathcal{F}(<),\mu,T)$ be a probability 
measure-preserving dynamical system such that $\mathcal{F}(<)$ (or simply $\mathcal{F}$)
is the $\sigma$-algebra
generated by the intervals according to some total order $<$ on $X$, $T$ is a measurable endomorphism,
and $\mu$ is non-atomic. We call $\Sigma$ a 
\textbf{shuffleable system}.
\end{definition}
Shuffleable systems emanate from the fact that they are
the most general class of measure-preserving dynamical system in which questions of shuffling in the 
sense of Persi Diaconis's geometric model of the Gilbert-Shannon-Reeds model of the riffle shuffle (see \cite{Diaconis})
may be studied.
Shuffleable systems include classical examples such as the doubling map and the one-sided 
full shift $\Sigma_2^+$, but the 
class of shuffleable systems is general enough to also include interval maps with non-atomic preserved measures as
well as systems on $\mathcal{I} = [0,1] \setminus \Q$, as well as full-measure subsets of 
standard probability spaces, which is essential as we will see in Section 3.

We note as a warning that, unlike with Lebesgue-Stieljes measures on the real line and standard probability spaces,
shuffleable spaces do not necessarily play well with the topology of open intervals.
\begin{example}
The measure space $(X=[0,1] \times (0,1),\mathcal{F}(<_{\text{lex}}),m_1)$, where 
$x <_{\text{lex}} y$ if $x_1<y_1$ or if both $x_1=y_1$ and $x_2<y_2$, and where 
$m_1$ is the probability measure induced on $X$ by the premeasure on intervals
given by $m_1(x,y] = y_1-x_1$, is a shuffleable space. In particular, if $\pi : X \to [0,1]$ is
the projection $\pi(x) = x_1$, and $m$ the Lebesgue measure, then for any interval $J \subseteq [0,1]$, 
$m_1(\pi^{-1}(J)) = m(J)$. However, if $F \subseteq [0,1]$ is a non-measurable subset of $[0,1]$, then 
$\pi^{-1}(F)$ is a non-measurable subset of $X$. Also, every set $\{x_1\} \times (0,1) \subseteq X$
is open, and so $\pi^{-1}(F) = \bigcup_{x_1 \in F} \{x_1\} \times (0,1)$ is open in 
the $<_{\text{lex}}$-order topology, yet non-measurable in $X$.
\end{example}

For a shuffleable system, we define $F_\mu(x) := \mu(-\infty,x]$, the distribution function of $\mu$.

%
\begin{proposition}\label{prop00}
If $(X,\mathcal{F}(<),\mu)$ is a non-atomic 
probability space with $\mathcal{F}$ the $\sigma$-algebra generated by the intervals according to the total order $<$, then for
any open interval $J = (a,b) \subseteq [0,1]$, there exists some point $x_J \in X$ such that 
\[ F_\mu(x_J) = \mu(-\infty,x_J] \in J. \]
\end{proposition}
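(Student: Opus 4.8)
I would argue by contradiction, assuming $F_\mu$ takes no value in $J=(a,b)$. Since $x<y$ implies $(-\infty,x]\subseteq(-\infty,y]$, the function $F_\mu$ is non-decreasing along $<$, so the sets
\[
A:=\{x\in X:F_\mu(x)\le a\},\qquad B:=\{x\in X:F_\mu(x)\ge b\}
\]
are respectively a down-set and an up-set; being intervals they lie in $\mathcal F$, they are disjoint, and by the assumption on $J$ they cover $X$ with every element of $A$ preceding every element of $B$. The proof then reduces to the two estimates $\mu(A)\le a$ and $\mu(B)\le 1-b$, for from these
\[
1=\mu(X)=\mu(A)+\mu(B)\le a+(1-b)=1-(b-a)<1,
\]
which is absurd.

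To get $\mu(A)\le a$ I would prove the auxiliary fact that \emph{every down-set $D$ satisfies $\mu(D)=\sup\{F_\mu(x):x\in D\}$}; with $D=A$ this yields $\mu(A)\le a$ at once. Symmetrically, \emph{every up-set $U$ satisfies $\mu(U)=\sup\{\mu([y,\infty)):y\in U\}$}, and applying this to $U=B$ gives $\mu(B)\le 1-b$, since $\mu([y,\infty))=1-F_\mu(y)\le 1-b$ for $y\in B$ (here non-atomicity enters, via $\mu(\{y\})=0$, to identify $\mu((-\infty,y))$ with $F_\mu(y)$). To prove the first auxiliary fact, set $s:=\sup\{F_\mu(x):x\in D\}$, pick $x_n\in D$ with $F_\mu(x_n)\to s$, and replace $x_n$ by $\max(x_1,\dots,x_n)\in D$ so that $(x_n)$ increases (its $F_\mu$-values still tend to $s$). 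Then $D_0:=\bigcup_n(-\infty,x_n]$ is a down-set with $\mu(D_0)=\lim_n F_\mu(x_n)=s$ by continuity from below, so it remains only to show $\mu(D\setminus D_0)=0$. Every $z\in D\setminus D_0$ exceeds all the $x_n$, so $s\le F_\mu(z)\le s$, i.e.\ $F_\mu(z)=s$; hence $D\setminus D_0$ is an interval on which $F_\mu$ is constant, and for $p<q$ in it $\mu(\genset{p,q})=F_\mu(q)-F_\mu(p)=0$.

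The step I expect to be the real obstacle is the last one: showing that a measurable interval $K$ on which $F_\mu$ is constant must be $\mu$-null. If $K$ has a countable cofinal set of points $(p_k)$ this is clear, since then $K=\bigcup_k\genset{p_0,p_k}$ is a countable union of null sets; this already covers the standard-probability-space situations relevant later in the paper. In general one must rule out a non-atomic probability measure, defined on a $\sigma$-algebra generated by the intervals of a total order, that concentrates at the far end of an uncountable-cofinality chain. One route is to combine non-atomicity --- which lets one split off a positive-measure piece of $K$ and, using that every $\mathcal F$-set is approximated in measure by finite unions of intervals, relocate its mass inside a single interval on which $F_\mu$ is still constant --- with the elementary fact that a monotone $\omega_1$-indexed sequence of real numbers is eventually constant, so as to force the relevant order down to one with countable cofinality, where the previous paragraph applies; I expect this to be the technical heart of the argument. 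An alternative opening move would be Sierpi\'nski's theorem that the range of a non-atomic measure is an interval, giving $E$ with $\mu(E)=\tfrac{a+b}{2}$, followed by a rearrangement of $E$ into an initial segment of equal measure --- but I would expect the same difficulty to re-emerge in carrying out that rearrangement.
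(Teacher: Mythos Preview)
Your approach coincides with the paper's: both argue by contradiction and split $X$ into the down-set where $F_\mu\le a$ and its complement. The paper writes this as $L=\bigcup\{(-\infty,y]:F_\mu(y)\le a\}$ (your $A$) together with $U=\bigcap\{(-\infty,y]:F_\mu(y)\ge b\}$, then shows that $U\setminus L$ contains at most one point and concludes from $\mu(U\setminus L)\ge b-a>0$ that this point would be an atom. Your version, deducing $\mu(A)+\mu(B)\le a+(1-b)<1$, is a slightly cleaner packaging of the same idea.

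The step you flag as ``the real obstacle'' is exactly the step the paper does not justify. The paper simply asserts ``$\mu(L)\le a<b\le\mu(U)$'' on the grounds that $L$ and $U$ are ``monotone unions and intersections of intervals,'' and moves on. But these unions and intersections are in general uncountable, so continuity of measure does not apply directly; your worry that one must rule out positive mass hiding at the far end of an interval on which $F_\mu$ is constant is well placed, and the Dieudonn\'e measure on $\omega_1$ shows the assertion fails outright if ``non-atomic'' means only $\mu\{x\}=0$. So rather than missing something the paper supplies, you are being more scrupulous than the published proof at precisely the point where scruple is warranted. The route you sketch---using genuine non-atomicity together with approximation by finite unions of intervals, and the eventual constancy of $\omega_1$-indexed monotone real sequences---is a reasonable line of attack; the paper offers no guidance here.
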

\begin{proof}
By way of contradiction, suppose no value in $(a,b) \subseteq [0,1]$ is the measure of any 
left-ray $(-\infty,x]$; that is, $F_\mu(X) \cap (a,b) = \emptyset$.
Then the sets 
\[ L = \bigcup \set{ (-\infty,y] : y \in Y, F_\mu(y) = \mu(-\infty,y] \le a} \]
and
\[ U = \bigcap \set{ (-\infty,y] : y \in Y, F_\mu(y) = \mu(-\infty,y] \ge b}\]
are monotone unions and intersections of intervals, and thus are intervals themselves,
and $\mu(L) \le a < b \le \mu(U)$, so that $I = U \setminus L$ is also an interval with positive measure 
$\mu(I) = \mu(U) - \mu(L) \ge b-a>0$.

We note, however, that $U$ contains at most one element $y'$ such that $\mu(-\infty,y'] \ge b$; if there were two, $y_1,y_2 \in U$, then
$y_1 <y_2$ (without loss of generality) and then $U \subseteq (-\infty,y_1]$ cannot contain $y_2$, a contradiction. Thus, for every $x \in 
U \setminus \set{y'}$, $F_\mu(x) = \mu(-\infty,x] < b$. However, by assumption, no element $x$ satisfies $a < F_\mu(x) < b$, so this implies
further that $F_\mu(x) \le a$, so that $x \in L$. Consequently, $I = U \setminus L$ is a set containing at most one element, yet possessing
positive measure. If $I = \set{y'}$ then $I$ is atomic; if $I = \emptyset$, then $\mu(\emptyset) > 0$ is absurd. We arrive at a contradiction; 
there must exist $x \in X$ such that $F_\mu(x) \in (a,b)$.
\end{proof}

Following the general construction of Stefan Heinemann and Oliver Schmitt \cite{Heinemann00}, we require some analogue to the metric structure utilized; 
we resolve this by utilizing a pseudometric,
the measure of the interval between two points. However, we must earn permission to utilize 
a pseudometric in place of a metric by strengthening the aperiodicity condition.
\begin{definition}
We say $T$ is \textbf{$n$-nigh aperiodic} if 
$\mu\set{ x : \mu\genset{x,T^n x} = 0 } = 0$, and that $T$ is \textbf{nigh aperiodic} if it is $n$-nigh aperiodic for every $n \ge 1$.
\end{definition}

We define nigh aperiodicity in this way to highlight intervals of the form $\genset{x,T^n x}$ which will be key to our proof of
Rokhlin's lemma, but the measure therein has a much more familiar form in terms of the distribution function $F_\mu$. In particular,
\begin{align*}
\mu\!\genset{a,b} 
&=
\begin{cases}
\mu[a,b] = F_\mu(b) - F_\mu(a) & \text{if } a<b \\
\mu[b,a] = F_\mu(a) - F_\mu(b) & \text{if } b<a \\
\mu\{a\} =\mu\{b\}= 0 &\text{if } a=b
\end{cases}\\
&= \abs{F_\mu(b) - F_\mu(a)}.
\end{align*}
In particular this shows that $\set{x : \mu\!\genset{x,T^n x} = 0}$ is a measurable set, since it is the preimage of $0$ for
the map $\abs{F_\mu(T^{n} x) - F_\mu(x)}$, and thus nigh aperiodicity is well-defined. This also permits us to view the nigh
aperiodicity condition as a relation between the distribution function $F_\mu$ and its orbit in $L^1(X)$
under the operator $U_T$ defined
by $U_T f (x) = f(T(x))$: $T$ is nigh aperiodic iff for every $n$,
\[
\mu\{ x \in X : F_\mu(x) = U_T^n F_\mu(x) \} = 0.
\]

\begin{proposition}\label{prop1}
Suppose $(X,\mathcal{F}(<),\mu,T)$ is a nigh aperiodic shuffleable system. For every $n$ there exists $E \in \mathcal{F}$ of 
positive measure such that $\mu(E \cap T^{-n} E) = 0$.

Further, for any set $U$ of positive $\mu$-measure such that $\mu(U \triangle T^{-1} U) = 0$, 
and $\nu = \mu|_U$, there exists a set $E$ of positive $\nu$-measure such that 
$\nu(E \cap T^{-n} E) = 0$, so that $E' = U \cap E$ is a positive $\mu$-measure set satisfying $\mu(E' \cap T^{-n} E') = 0$.
\end{proposition}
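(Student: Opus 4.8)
The plan is to follow the Heinemann--Schmitt strategy, using in place of a metric the pseudometric $d(x,y) = \mu\genset{x,y} = |F_\mu(x) - F_\mu(y)|$. The guiding observation is a diameter-versus-gap dichotomy: if a measurable set $E$ has $d$-diameter strictly less than some $\delta>0$ while every $x \in E$ satisfies $d(x, T^n x) \ge \delta$, then $E \cap T^{-n}E = \emptyset$, since a point $x$ of that intersection would have both $x$ and $T^n x$ in $E$, forcing $d(x, T^n x) < \delta$.

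For the first assertion: by $n$-nigh aperiodicity the set $\{x : \mu\genset{x,T^n x} > 0\}$ has full measure, and it is the increasing union over $j \in \N$ of the measurable sets $A_j := \{x : |F_\mu(T^n x) - F_\mu(x)| \ge 1/j\}$ (measurability having been recorded just after the definition of nigh aperiodicity), so $\mu(A_j) > 0$ for some $j$; put $\delta = 1/j$. Partition $[0,1] \supseteq F_\mu(X)$ into finitely many intervals $J_1, \dots, J_m$ each of length less than $\delta$; each $F_\mu^{-1}(J_k)$ is an interval, hence lies in $\mathcal{F}$, and $A_j = \bigcup_{k=1}^m \bigl(A_j \cap F_\mu^{-1}(J_k)\bigr)$, so some piece $E := A_j \cap F_\mu^{-1}(J_k)$ has positive measure. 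Its $d$-diameter is at most $|J_k| < \delta$, so the dichotomy yields $E \cap T^{-n}E = \emptyset$, whence $\mu(E \cap T^{-n}E) = 0$.

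For the second assertion the natural move is to restrict everything to $U$ --- normalize $\nu$, observe that $(U,\mathcal{F}|_U,\nu,T|_U)$ is a shuffleable system, and quote the first part --- and the step I would expect to be delicate is checking that $T|_U$ inherits nigh aperiodicity, since the measure of the interval between two points of $U$ computed inside $U$, namely $\mu(\genset{x,y}\cap U)$, can vanish while $\mu\genset{x,y} > 0$; thus $n$-nigh aperiodicity of $T$ does not obviously transfer to $T|_U$. I would dodge this by not passing to the induced system and by keeping $F_\mu$ --- rather than the distribution function of $\nu$ --- as the yardstick: the $\mu$-null set $\{x : \mu\genset{x,T^n x} = 0\}$ is also $\nu$-null, so $\nu(A_j \cap U) > 0$ for some $j$, and with $\delta = 1/j$ and $J_1, \dots, J_m$ as before, the set $E := A_j \cap U \cap F_\mu^{-1}(J_k)$ for a suitable $k$ lies in $U$, has positive $\nu$-measure, and satisfies $E \cap T^{-n}E = \emptyset$ by the same diameter-versus-gap computation; hence $\nu(E\cap T^{-n}E) = 0$. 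Since $E \subseteq U$ we have $E' = U\cap E = E$, so $\mu(E') = \nu(E) > 0$ and $\mu(E'\cap T^{-n}E') = 0$ for free --- in fact the invariance hypothesis $\mu(U \triangle T^{-1}U) = 0$ is not needed for this particular argument (nor is Proposition \ref{prop00}); the real content throughout is the diameter-versus-gap dichotomy, together with the remark that in the second part one should hold on to $F_\mu$ rather than restrict the distribution function.
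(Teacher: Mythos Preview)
Your argument is correct and genuinely more direct than the paper's. The paper proceeds by contradiction: assuming every positive-measure $A$ satisfies $\mu(A \cap T^{-n}A) > 0$, it first deduces (via the trick of considering $A \setminus T^{-n}A$) that every such $A$ then satisfies $\mu(A \triangle T^{-n}A) = 0$; it then invokes Proposition~\ref{prop00} to build a countable cover $\mathscr{R}_\varepsilon$ of $X$ by closed intervals of measure $< \varepsilon$, and applying the almost-invariance to each $I \in \mathscr{R}_\varepsilon$ forces $\mu\!\genset{x, T^n x} < \varepsilon$ for a.e.\ $x$, contradicting nigh aperiodicity. Your route sidesteps both the contradiction frame and Proposition~\ref{prop00}: you first isolate a positive-measure $A_j$ on which $d(x, T^n x) \ge \delta$, then cut it by the finite partition $\{F_\mu^{-1}(J_k)\}$ into pieces of $d$-diameter $< \delta$, obtaining an $E$ with $E \cap T^{-n}E$ literally empty---slightly stronger than the paper's measure-zero conclusion. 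Your observation that the second claim needs neither the invariance hypothesis $\mu(U \triangle T^{-1}U) = 0$ nor Proposition~\ref{prop00} is also correct; the paper's version uses invariance so that $\nu$ is $T$-preserved and the step $\mu(A \setminus T^{-n}A) = \mu(T^{-n}A \setminus A)$ goes through for $\nu$. The one thing the paper's detour buys is the footnoted observation---if every positive-measure $B \subseteq A$ has $\mu(B \cap T^{-n}B) > 0$ then $\mu(A \triangle T^{-n}A) = 0$---which is reused verbatim in the proof of Proposition~\ref{prop2}; your direct argument does not produce this lemma as a byproduct, so it would have to be recorded separately.
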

\begin{proof}
By Proposition~\ref{prop00}, we know that every open subset in $[0,1]$ contains some value $r$ which is the value
of $F_\mu(x_r) = \mu(-\infty,x_r]$ for some $x_r \in X$, so that we can find a countable dense subset $S$ of $[0,1]$ of values 
in the range of $F_\mu$.
For each $r \in S$, we select one such $x_r$, and collect them together in the countable set
$Q\subseteq X$.

We now define a collection of intervals with endpoints in $Q$ which have measure no greater than some fixed $\varepsilon>0$. 
Let us define
\[ 
\mathscr{R} = \set{(-\infty,b], [a,b], [a,\infty): a,b \in Q}
\]
and from this we refine to
\[
\mathscr{R}_\varepsilon = \set{I \in \mathscr{R} : \mu(I) < \varepsilon}.
\]
$\mathscr{R}$ is in bijection with the disjoint union $Q \sqcup Q^2 \sqcup Q$, and thus it is countable, and $\mathscr{R}_\varepsilon$ inherits
countability also.

For every $\varepsilon>0$, we know $\mathscr{R}_\varepsilon$ covers $X$; for any $x \in X$ with $0<F_\mu(x)< 1$, we know there are $a,b \in Q$
such that 
\[
F_\mu(x) - \frac{\varepsilon}{2}
<
F_\mu(a)
<
F_\mu(x)
< 
F_\mu(b)
<
F_\mu(x) + \frac{\varepsilon}{2} .
\]
Thus, in particular, 
\[
\mu[a,b] = F_\mu(b) - F_\mu(a) < \varepsilon .
\]
By monotonicity, since $F_\mu(a) < F_\mu(x) < F_\mu(b)$, $x \in [a,b]$.
On the other hand, if $F_\mu(x) = \mu(-\infty,x]=0$, then there exists $b$ such that $0 < F_\mu(b)= \mu(-\infty,b] < \varepsilon$, 
and so again $x \in (-\infty,b]$, and if $F_\mu(x) = \mu(-\infty,x]=1$, then there exists $a$ such that
$1 > F_\mu(a) = \mu(-\infty,a] > 1-\varepsilon$, so that
$x \in [a,\infty)$ and $\mu[a,\infty) < \varepsilon$. In any case, there exists some interval in $\mathscr{R}_\varepsilon$ containing
$x$, and thus the collection of ``small'' intervals $\mathscr{R}_\varepsilon$ covers all of $X$.

Suppose there is some $n$ such that for every $A \in \mathcal{F}$ with positive measure, 
$\mu(A \cap T^{-n} A) > 0$. Consider $A \setminus T^{-n} A$; then
$\mu( \sq{A \setminus T^{-n} A} \cap T^{-n} \sq{A \setminus T^{-n} A } ) = \mu(\emptyset) = 0$, which implies that
$\mu( A \setminus T^{-n} A ) = \mu(T^{-n} A \setminus A) = 0$.\footnote{%
In general, we note for future reference that we have shown that if $A$ is any set of positive
measure in a measure-preserving dynamical system, 
and every positive measure subset $B \subseteq A$ satisfies $\mu(B \cap T^{-n} B) > 0$, then 
$\mu(A \setminus T^{-n} A) = \mu(T^{-n} A \setminus A) = 0$.}

Let us fix $\varepsilon>0$ and apply these observations to the collection $\mathscr{R}_\varepsilon$. Letting $I \in \mathscr{R}_\varepsilon$, 
we know that $\mu(T^{-n} I \setminus I) = 0$. Thus, for $\mu$-a.e.\ $x \in T^{-n} I$, we also have $x \in I$. Since
$I$ and $\genset{x,T^n x}$ are both closed intervals, we have
$\genset{x,T^n x} \subseteq I$, so that
$\mu\!\genset{x,T^{n} x} \le \mu(I) < \varepsilon$ for $\mu$-a.e.\ $x \in T^{-n} I$. That is to say,
\[
\mu\!\set{x \in X : T^{n} x \in I, \mu\!\genset{x,T^n x} \ge \varepsilon} = 0 .
\]
Since $\mathscr{R}_\varepsilon$ covers $X$, 
so also does $\set{T^{-n} I : I \in \mathscr{R}_\varepsilon}$, and we can write
\begin{align*}
\mu \set{x \in X : \mu\!\genset{x,T^n x} \ge \varepsilon}
&= \mu\!\p{ \bigcup_{I \in \mathscr{R}_\varepsilon} \set{x \in X : T^n x \in I,\; \mu\!\genset{x,T^n x} \ge \varepsilon}}\\
&\le \sum_{I \in \mathscr{R}_\varepsilon}\mu\!\set{x \in X : T^n x \in I,\; \mu\!\genset{x,T^n x} \ge \varepsilon} = 0 .
\end{align*}
Given that $\varepsilon$ was arbitrary, by letting this go to $0$ we find that in general it must be that
\[\mu\!\set{x \in X : \mu\!\genset{x,T^n x} > 0 } = 0 . \]
However, this implies that $\mu\!\genset{x,T^n x} = 0$ for $\mu$-a.e. $x$, contradicting our assumption that
the system was $n$-nigh aperiodic. Therefore, our supposition that $\mu(A \cap T^{-n} A) \neq 0$ for
every positive measure $A$ must be false, and consequently there must  exist  some set $F \in \mathcal{F}$ 
such that 
$\mu(E \cap T^{-n} E) = 0$, proving the first claim.

We now begin the second claim. Since
$\mu( U \triangle T^{-1} U ) = 0$ it follows that $\nu = \mu|_U$ is a $T$-preserved measure. 
Suppose that for every $A \in \mathcal{F}$, $\nu(A \cap T^{-n} A) > 0$. By the same reasoning, 
$\nu(T^{-n} A \setminus A) = 0$, and thus letting $I \in \mathscr{R}_\varepsilon$,
for $\nu$-a.e.\ $x \in I$, $T^n(x) \in I$, and so $\mu\!\genset{x,T^n x} < \varepsilon$,
so that
\[
\nu\!\set{x \in X : T^{n} x \in I, \mu\!\genset{x,T^n x} \ge \varepsilon} = 0 .
\]
The same reasoning as before allows us to conclude that $\nu\!\set{x \in X : \mu\!\genset{x,T^n x} \ge \varepsilon} = 0$
for every $\varepsilon > 0$, and thus $\nu\set{x \in X : \mu\!\genset{x,T^n x} > 0} = 0$. Then the complement of this set must
have $\nu$-measure $1$, so that we have
\begin{align*}
\nu\!\set{ x \in X : \mu\!\genset{x,T^n x} = 0 } &= 1 \\
\frac{\mu\!\p{U \cap \set{ x \in X : \mu\!\genset{x,T^n x} = 0 }}}{\mu(U)} &= 1\\
\mu\!\p{U \cap \set{ x \in X : \mu\!\genset{x,T^n x} = 0 } } &= \mu(U) >0 \\
\mu\!\set{ x \in X : \mu\!\genset{x,T^n x} = 0 } &> 0.
\end{align*}
This however contradicts our claim that $(X,\mathcal{F},\mu,T)$ was nigh aperiodic. Thus, there must exist some set
$E$ with positive $\nu$-measure such that 
\[ 
\nu(E \cap T^{-n} E) 
= 
\frac{ \mu(U \cap (E \cap T^{-n} E)) }{\mu(U)}
= 
\frac{ \mu([U \cap E] \cap T^{-n} [U \cap E]) }{\mu(U)}
= 
0 \]
and therefore it follows that for $E'=U \cap E$, $\mu(E' \cap T^{-n} E') =0$. The set $E'$ must also have positive $\mu$-measure
since
\[ 0 < \nu(E) 
= \frac{\mu(U \cap E)}{\mu(U)} = \frac{\mu(E')}{\mu(U)} \]
so that $E'$ satisfies the second claim.
\end{proof}

\begin{definition}
If $E \in \mathcal{F}$ is a set of positive measure such that $\mu(T^{-m} E \cap E) = 0$ for all $m=1,\cdots,n-1$, then 
we say that $E$ induces an \textbf{$n$-chain}.
\end{definition}

\begin{proposition}\label{prop2}
Let $(X,\mathcal{F}(<),\mu,T)$ be a nigh aperiodic shuffleable system. For any $n \in \N_+$, and any positive measure 
set $G \in \mathcal{F}$, there exists $F \subseteq G$ which induces an $n$-chain.
\end{proposition}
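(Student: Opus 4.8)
The plan is to bootstrap from Proposition~\ref{prop1} by an induction on the length of the chain, shrinking the set one step at a time. Put $F_0 = G$. Suppose inductively that for some $1 \le m \le n-1$ we have found a positive-measure set $F_{m-1} \subseteq G$ with $\mu(F_{m-1} \cap T^{-i} F_{m-1}) = 0$ for all $i = 1, \dots, m-1$. I aim to produce a positive-measure $F_m \subseteq F_{m-1}$ which in addition satisfies $\mu(F_m \cap T^{-m} F_m) = 0$; since $F_m \subseteq F_{m-1}$ forces $T^{-i} F_m \subseteq T^{-i} F_{m-1}$, the earlier conditions are inherited automatically. After $n-1$ steps the set $F = F_{n-1} \subseteq G$ lies inside every $F_m$, so $\mu(F \cap T^{-m} F) \le \mu(F_m \cap T^{-m} F_m) = 0$ for $m = 1, \dots, n-1$, i.e.\ $F$ induces an $n$-chain inside $G$. (When $n = 1$ there is nothing to do.)

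Thus the whole proof reduces to the following claim: \emph{for any positive-measure $A \in \mathcal{F}$ and any $j \ge 1$, there is a positive-measure $B \subseteq A$ with $\mu(B \cap T^{-j} B) = 0$}; applying it with $A = F_{m-1}$, $j = m$ then furnishes $F_m$. To prove the claim, suppose for contradiction that every positive-measure subset $B \subseteq A$ satisfies $\mu(B \cap T^{-j} B) > 0$. By the footnote to Proposition~\ref{prop1} this forces $\mu(A \setminus T^{-j} A) = \mu(T^{-j} A \setminus A) = 0$, that is, $\mu(A \triangle T^{-j} A) = 0$.

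Now I would pass to the power system $\Sigma_j = (X, \mathcal{F}(<), \mu, T^j)$. Since $T$ is a measure-preserving measurable endomorphism, so is $T^j$; the $\sigma$-algebra and the (still non-atomic) measure are unchanged, so $\Sigma_j$ is again a shuffleable system. It is moreover nigh aperiodic: for each $i \ge 1$ the set $\set{x : \mu\!\genset{x, (T^j)^i x} = 0} = \set{x : \mu\!\genset{x, T^{ji} x} = 0}$ has measure zero, because the original $T$ is $ji$-nigh aperiodic. Since $\mu(A \triangle (T^j)^{-1} A) = 0$, the second part of Proposition~\ref{prop1}, applied to $\Sigma_j$ with $U = A$ and with the role of $n$ played by $1$, yields a positive-$\mu$-measure set $E' = A \cap E \subseteq A$ with $\mu(E' \cap (T^j)^{-1} E') = \mu(E' \cap T^{-j} E') = 0$. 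This contradicts the supposition that $A$ has no such subset, so the claim holds, and with it the proposition.

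The only genuine obstacle is the degenerate case in the proof of the claim, where $A$ turns out to be, modulo null sets, invariant under $T^{-j}$ so that no immediate disjointification inside $A$ is available; the resolution is precisely to observe that $(X, \mathcal{F}(<), \mu|_A, T^j)$ meets the hypotheses of the second half of Proposition~\ref{prop1} — which is exactly the scenario that second half was designed to handle. Everything else is routine: checking that $T^j$ preserves nigh aperiodicity and that subsets inherit the chain conditions.
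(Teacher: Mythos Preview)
Your argument is correct, and it takes a genuinely different route from the paper's. Both proofs proceed by induction on the chain length and both fall back on the footnoted observation that if every positive-measure $B\subseteq A$ has $\mu(B\cap T^{-j}B)>0$ then $\mu(A\triangle T^{-j}A)=0$. The divergence is in how this almost-invariance is exploited. The paper stays inside the original system $(X,\mathcal F,\mu,T)$: it enlarges $G$ to $G^\ast=\bigcup_{j\ge0}T^{-j}G$, uses Poincar\'e recurrence to show $\mu(G^\ast\triangle T^{-1}G^\ast)=0$, applies the second half of Proposition~\ref{prop1} to $\mu|_{G^\ast}$ with parameter $n-1$, and then must translate the resulting set back into $G$ by choosing an appropriate preimage---a step that leans on the $(n-1)$-chain structure of $G$ to decompose $G^\ast$ into finitely many levels. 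You instead pass to the power system $\Sigma_j=(X,\mathcal F(<),\mu,T^j)$, observe that it is again nigh aperiodic and shuffleable, and apply the second half of Proposition~\ref{prop1} with $U=A$ and parameter $1$. The payoff is that you avoid Poincar\'e recurrence entirely and never have to pull the set back: the $E'$ you obtain already sits inside $A$. Your key claim (any positive-measure $A$ contains a positive-measure $B$ with $\mu(B\cap T^{-j}B)=0$) is a clean standalone statement that makes the induction purely formal, whereas the paper's induction step is entangled with the chain structure already built. The paper's approach has the minor virtue of never leaving the original transformation, but the verification that $T^j$ inherits nigh aperiodicity is one line, so your route is the shorter of the two.
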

\begin{proof}
The proof will proceed by induction.
The case $n=1$ is trivial; let $F=G$. Suppose then that $G$ induces an $n-1$-chain, but does not
contain any subset inducing an $n$-chain.
By our observation from the previous proposition, if every positive measure subset $B \subseteq G$ satisfies 
$\mu(B \cap T^{-(n-1)} B) >0$, then $\mu(G \setminus T^{-(n-1)} G) = 0$, so that $G$ and $T^{-(n-1)} G$ 
are identical up
to a set of measure $0$. For any $j\in \set{0,\cdots,n-2}$, $\mu(T^{-j} G \triangle T^{-j-k(n-1)} G) = 0$. Thus,
\[
\mu\p{
\p{\bigcup_{j=0}^\infty T^{-j} G}
\triangle
\p{\bigcup_{j=0}^{n-2} T^{-j} G}
}
=0 .
\]
Then, we restrict $\mu$ to the set $G^\ast = \bigcup_{j=0}^{\infty} T^{-j} G$. 
By the Poincar\'e recurrence theorem, almost every point in $G$ recurs, and thus is in some
set $T^{-j} G$, so that 
\[ \mu(G^\ast \triangle T^{-1} G^\ast) = \mu\p{ G \setminus \bigcup_{j=1}^\infty T^{-j} G } = 0 \]
The set $G^\ast$ is measure theoretically equivalent to $\bigsqcup_{j=0}^{n-2} T^{-j} G$, and applying
the second claim of Proposition~\ref{prop1} to the new system $(X,\mathcal{F},\mu|_{G^\ast},T)$, we find a positive $\mu|_{G^\ast}$-measure 
subset $F' \subseteq G^\ast$ such that
$\mu|_{G^\ast}( F' \cap T^{-(n-1)} F' ) = 0$. In particular, there exists a $j \in \set{0,1,\cdots,n-2}$ such that
$\mu|_{G^\ast}(F' \cap T^{-j} G) > 0$; we translate this positive measure 
part of $F'$ back into $T^{-(n-1)}G$ and thus $G$ itself by taking $(n-j)+1$ preimages, and obtain the set 
\[
F \defeq G \cap T^{-(n-j)-1}( F' \cap T^{-j} G )
= 
T^{-(n-j)-2} (F') \cap G \cap T^{-(n-1)} G .
\]
We know $\mu|_{G^\ast}(F) = \mu|_{G^\ast}(T^{-j}(F)) = \mu|_{G^\ast}(F' \cap T^{-j} G) > 0$ and 
$\mu|_{G^\ast}(F \cap T^{-(n-1)} F) = 0$.
Since $G$ generates an $n-1$-chain, $G$, $T^{-1} G$, $\ldots$, $T^{-(n-2)} G$ are all disjoint equal measure components of $G^\ast$, and
so for any $A \in \mathcal{F}$,
\[
\mu|_{G^\ast}(A)= \frac{\mu(A \cap G^\ast)}{\mu(G^\ast)} = \frac{\mu(A \cap G^\ast)}{(n-1) \mu(G)},\]
which implies that
$\mu(F) > 0$ and $\mu(F \cap T^{-(n-1)} F) = 0$, since the corresponding claims in $\mu|_{G^\ast}$ hold also.
However, this implies that $F$ does in fact induce an $n$-chain, contrary to our previous assumption. Therefore, 
if $G$ is a set of positive measure inducing an $n-1$-chain, then $G$ contains a
subset $F \subseteq G$ of positive measure 
inducing an $n$-chain, and consequently $(X,\mathcal{F}(<),\mu,T)$ contains $n$-chains of all possible lengths
$n$.
\end{proof}

\begin{theorem}[Rokhlin lemma for shuffleable systems]
Let $(X,{\mathcal{F}(<)},\mu,T)$ be a nigh aperiodic shuffleable system. Then for any $n \in \N$ and $\varepsilon > 0$, there exists a set
$E$ inducing an $n$-chain such that 
\[
\mu\!\p{ \bigsqcup_{j=0}^{n-1} T^{-j} E } > 1-\varepsilon.
\]
\end{theorem}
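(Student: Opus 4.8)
The plan is to reduce the theorem to one auxiliary existence statement and then produce $E$ by ``chopping'' a tall skyscraper over a suitable base into blocks of height $n$. Fix $n\ge 1$ and $\varepsilon>0$; we may assume $0<\varepsilon<1$, since for $\varepsilon\ge 1$ any set inducing an $n$-chain (for instance one furnished by Proposition~\ref{prop2} with $G=X$) already satisfies the conclusion. Choose an integer $N>\max(n,\,n/\varepsilon)$. The auxiliary claim I would establish is that \emph{there is a positive-measure $F\in\mathcal F$ inducing an $N$-chain whose forward saturation $F^{\ast}:=\bigcup_{j\ge 0}T^{-j}F$ has $\mu(F^{\ast})=1$}. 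Granting it, let $\psi(x):=\min\{k\ge 0:T^{k}x\in F\}$, which is finite for $\mu$-a.e.\ $x$ because $\mu(F^{\ast})=1$, and for $z\in F$ let $\rho(z):=\min\{k\ge 1:T^{k}z\in F\}$, which is finite $\mu$-a.e.\ on $F$ by Poincar\'e recurrence and is $\ge N$ $\mu$-a.e.\ since $F$ induces an $N$-chain. Both functions are measurable, because $\{\psi=k\}=T^{-k}F\setminus\bigcup_{i<k}T^{-i}F$ and $\{z\in F:\rho(z)=k\}=F\cap T^{-k}F\setminus\bigcup_{1\le i<k}T^{-i}F$ lie in $\mathcal F$. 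I would then set
\[
Z:=\{z\in F:\ n\nmid\rho(z)\},\qquad E_{0}:=\{x\in X:\ n\mid\psi(x)\},\qquad E:=E_{0}\setminus Z,
\]
observe that $Z\subseteq F\subseteq E_{0}$ and that $\mu(Z)\le\mu(F)\le 1/N<\varepsilon/n$, and claim that this $E$ does the job.

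The hard part will be the auxiliary claim; this is where noninvertibility and nigh aperiodicity genuinely enter, via Proposition~\ref{prop2}. I would prove it by an exhaustion argument. Consider the families $\{F_{\iota}\}$ — necessarily at most countable — of positive-measure sets, each inducing an $N$-chain, whose saturations $F_{\iota}^{\ast}$ are pairwise $\mu$-essentially disjoint, and take a maximal such family (Zorn's lemma). Put $F:=\bigcup_{\iota}F_{\iota}$. Then $F$ again induces an $N$-chain: for $1\le m\le N-1$ and $\iota\ne\iota'$ we have $T^{-m}F_{\iota'}\subseteq F_{\iota'}^{\ast}$ and $F_{\iota}\subseteq F_{\iota}^{\ast}$, so $\mu(F_{\iota}\cap T^{-m}F_{\iota'})\le\mu(F_{\iota}^{\ast}\cap F_{\iota'}^{\ast})=0$, while $\mu(F_{\iota}\cap T^{-m}F_{\iota})=0$ by hypothesis, and summing over the countable family gives $\mu(F\cap T^{-m}F)=0$; also $F^{\ast}=\bigcup_{\iota}F_{\iota}^{\ast}$. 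If $\mu(F^{\ast})<1$, set $W:=X\setminus F^{\ast}$. Each $F_{\iota}^{\ast}$ satisfies $\mu(F_{\iota}^{\ast}\triangle T^{-1}F_{\iota}^{\ast})=0$, the symmetric difference being contained in the $\mu$-null set of points of $F_{\iota}$ that never return (Poincar\'e), so the same holds for $F^{\ast}$ and hence for $W$; in particular $W^{\ast}=W$ modulo null. Proposition~\ref{prop2} applied with $G=W$ now yields a positive-measure $F'\subseteq W$ inducing an $N$-chain, and then $(F')^{\ast}\subseteq W^{\ast}=W$ is essentially disjoint from $F^{\ast}$, so $\{F_{\iota}\}\cup\{F'\}$ is a strictly larger admissible family, contradicting maximality. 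Hence $\mu(F^{\ast})=1$. (If the system is ergodic this step is automatic: any single $F$ from Proposition~\ref{prop2} already has $\mu(F^{\ast})=1$, since $F^{\ast}$ is forward-invariant mod null and of positive measure.)

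With $F$ in hand I would finish with two routine verifications. For the $n$-chain property, fix $1\le m\le n-1$ and suppose $z\in E_{0}\cap T^{-m}E_{0}$, with $l:=\psi(z)$ a multiple of $n$. If $l\ge n$ then $m<n\le l$, and since $T^{i}z\notin F$ for $i<l$ one gets $\psi(T^{m}z)=l-m\equiv-m\not\equiv 0\pmod n$, which is impossible; so $l=0$, i.e.\ $z\in F$, and then $\rho(z)\ge N>m$ gives $\psi(T^{m}z)=\rho(z)-m$, so $n\mid\psi(T^{m}z)$ forces $\rho(z)\equiv m\not\equiv 0\pmod n$, i.e.\ $z\in Z$. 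Thus $E_{0}\cap T^{-m}E_{0}\subseteq Z$, whence $E\cap T^{-m}E\subseteq E\cap Z=\emptyset$. For the covering, note that $\bigcup_{j=0}^{n-1}T^{-j}E_{0}=X$ mod null, since for $\mu$-a.e.\ $x$, putting $l=\psi(x)$ and $j^{\ast}:=l\bmod n\le l$, one has $\psi(T^{j^{\ast}}x)=l-j^{\ast}\equiv 0\pmod n$, i.e.\ $x\in T^{-j^{\ast}}E_{0}$. Since $E=E_{0}\setminus Z$,
\[
\bigcup_{j=0}^{n-1}T^{-j}E\ \supseteq\ \Bigl(\bigcup_{j=0}^{n-1}T^{-j}E_{0}\Bigr)\setminus\bigcup_{j=0}^{n-1}T^{-j}Z,
\]
and the right-hand side has measure at least $1-n\mu(Z)\ge 1-n\mu(F)\ge 1-n/N>1-\varepsilon$. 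Because $E$ induces an $n$-chain, the sets $T^{-j}E$ for $0\le j\le n-1$ are pairwise disjoint mod null, so this union coincides mod null with $\bigsqcup_{j=0}^{n-1}T^{-j}E$; that disjoint union therefore has measure exceeding $1-\varepsilon$, and in particular $\mu(E)>0$, so $E$ legitimately induces an $n$-chain.
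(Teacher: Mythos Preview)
Your proof is correct and follows essentially the same strategy as the paper's: use Zorn's lemma together with Proposition~\ref{prop2} to obtain a positive-measure base $F$ inducing a tall chain whose forward saturation $\bigcup_{j\ge 0}T^{-j}F$ is conull, then build $E$ from the first-entrance (Kakutani skyscraper) decomposition of $X$ over $F$. The execution differs only in two minor respects: the paper applies Zorn directly to single sets inducing $m$-chains (ordered by inclusion and strict measure increase) rather than to families with essentially disjoint saturations, and the paper takes $E=\bigcup_{k\ge 1}F^{kn-1}=\{x:\psi(x)\equiv n-1\pmod n\}$, which induces an $n$-chain outright with no correction needed, whereas your choice of the levels $\psi\equiv 0\pmod n$ forces the removal of the set $Z$ to handle the boundary effect at the base $F$.
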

We remind the reader that we build on the basic structure of the proof of the Rokhlin Lemma offered by Heinemann and 
Schmitt~\cite{Heinemann00}; their proof utilized the structure of a separable metric space with aperiodic dynamics 
to obtain the claim that $n$-chains exist for every $n$, as we have with Proposition~\ref{prop2}, but beyond this
the context is no longer necessary. 
\begin{proof}
Let $m \in \N$ be such that $m \ge n$ and $1/m < \varepsilon / (n-1)$. 
We know by Proposition~\ref{prop2} that there exist $m$-chains
in $\mathcal{F}$. We may equip the collection of sets in $\mathcal{F}$
which induce $m$-chains, $\mathscr{C}_m$, with the partial order
\[
F <_\mu F' \iff (F \subset F') \wedge (\mu(F) < \mu(F')).
\]
Suppose that $C$ is a chain in $\mathscr{C}_m$ in the sense of Zorn's 
lemma; we wish to show that $\mathscr{C}_m$ contains an upper bound
for the Zorn chain $C$. If $C$ contains an element $F^\ast$
with maximal measure for $C$, then we know that $\mu(F) < \mu(F^\ast)$ 
for all $F \in C$, so that $F^\ast$ is a unique
upper bound for $C$. Suppose
instead that $\sup_{F \in C} \mu(F) = M$, but $\mu(F) < M$ for every 
$F \in C$. For every $k \in \N$, we select some $F_k$ such that 
$\mu(F_k) > M-1/k$, and let $B = \bigcup_{k \in \N} F_k$. This is a 
countable union and therefore it resides in $\mathcal{F}$, and furthermore,
for any $F \in C$, there exists some $k$ such that $\mu(F) < \mu(F_k)$ 
and therefore also $F \subset F_k \subset B$. As a consequence, 
$B = \bigcup_{k \in \N} F_k = \bigcup_{F \in C} F$ is an upper bound for 
$C$ if it also induces an $m$-chain. If $B$ does not 
induce an $m$-chain,
then $\mu(B \cap T^{-j} B) > 0$ for some $j=1,2,\cdots,m-1$; but by continuity 
from below, this implies there must exist some $k_1,k_2 \in \N$ such
that $\mu(F_{k_1} \cap T^{-j} F_{k_2}) > 0$, and letting 
$k_0 = \max\set{k_1,k_2}$ this implies $\mu(F_{k_0} \cap T^{-j} F_{k_0}) > 0$, 
contradicting our assumption that $F_{k_0}$ itself induced an $m$-chain.
Thus, in any case, any chain $C \subseteq \mathscr{C}_m$
must accomodate an upper bound, and thus by Zorn's lemma, $\mathscr{C}_m$ 
contains maximal elements. Without loss of generality, let 
$F$ be a maximal element of $\mathscr{C}_m$, so that $F$ induces an $m$-chain, 
and no set containing $F$ with greater measure than $F$
induces an $m$-chain.

We claim that because $F$ is maximal in this sense, 
$\mu\p{\bigcup_{j=0}^\infty T^{-j} F} = 1$. Suppose that it were not; then the complement 
$Y = X \setminus \bigcup_{j=0}^\infty T^{-j} F$ must have positive measure; 
then by Proposition~\ref{prop2} there exists a positive measure $G \subseteq Y$ inducing
an $m$-chain. For every $k=0,1,\cdots,m-1$, 
\[
T^{-k} (G) \cap \bigcup_{j=0}^\infty T^{-j} F
\subseteq T^{-k} \p{
G \cap \bigcup_{j=0}^\infty T^{-j} F
}
= \emptyset .
\]
It follows that $\mu([F \cup G] \cap T^{-k} [F \cup G]) = 0$, and thus $F \cup G$ induces an $m$-chain.
Since $\mu(G) > 0$, we have $\mu(F) < \mu(F \cup G)$,
and thus $F <_\mu F \cup G$, in contradiction of the assumption that 
$F$ was maximal according to the partial order 
$<_\mu$. Thus, it follows that $\mu\p{\bigcup_{j=0}^\infty T^{-j} F } = 1$.
It  follows that almost every point in $X$ recurs to $F$ infinitely often.

We next define the sets $F^k = T^{-k} F \setminus \bigcup_{j=0}^{k-1} T^{-j} F$,
so that $F^k$ is the set of all $x \in X$ such that 
$T^k x \in F$ with $k$ the least natural number achieving this, and which
satisfy the recursive formula
\begin{align*}
F^{k+1} &= T^{-k-1}F \setminus \bigcup_{j=0}^{k} T^{-j} F
 = \p{T^{-k-1} F \setminus T^{-1} \p{ \bigcup_{j=0}^{k-1} T^{-j} F }} \setminus F\\
&= T^{-1} \p{F^{-k} \setminus \bigcup_{j=0}^{k-1} T^{-j} F} \setminus F
= T^{-1} F^k \setminus F .
\end{align*}
Utilizing their original definition, it is clear that the sets $F^k$ are pairwise 
disjoint, and that $\bigcup_{j=0}^N F^j = \bigcup_{j=0}^N T^{-j} F$,
so that $\mu\p{\bigsqcup_{j=0}^\infty F^j}=1$.

From this, we can tell that since $F^{k+1} \subseteq T^{-1} F^k$, more generally
we have $F^{k+j} \subseteq T^{-j} F^k$; and since 
$T^{-1} F^k \subseteq F^{k+1} \cup F$, so also 
\[ 
T^{-j} F^k \subseteq T^{-j+1} ( F^{k+1} \cup F ) 
\subseteq T^{-j+2} ( F^{k+2} \cup T^{-1} F \cup F )
\subseteq \cdots 
\subseteq F^{k+j} \cup \bigcup_{l=0}^{j-1} T^{-l} F .
\]
It follows that
\begin{equation}\label{eqn1}
F^{k+j} 
\subseteq T^{-j} F^k
\subseteq F^{k+j} \cup \bigcup_{l=0}^{j-1} T^{-l} F = F^{k+j} \cup \bigsqcup_{l=0}^{j-1} F^l  .
\end{equation}

We now assert that the set 
\[
E = \bigcup_{k=1}^\infty F^{kn-1}
\]
induces an $n$-chain. Suppose that for some $j \in \set{1,\cdots,n-1}$, we have
\[
x \in E \cap T^{-j} E .
\]
Then in particular, it must be that for some $p,q \in \N_+$, 
$x \in F^{p n -1} \cap T^{-j} (F^{q n -1})$. We know $F^{pn-1}$ is disjoint from $F^{qn-1+j}$, since $pn-1 \neq qn-1+j$
(which would imply $j = n(p-q)$, but $j \in \set{1,2,\cdots,n-1}$ which is disjoint from $n\Z$) and if $k_1 \neq k_2$ then
$F^{k_1} \cap F^{k_2} = \emptyset$ by construction. Since these two sets are disjoint, \eqref{eqn1} tells us that 
\[
x \in F^{pn-1} \cap T^{-j}(F^{qn-1})
\subseteq F^{pn-1} \cap \p{ F^{j+qn-1} \cup \bigsqcup_{l=0}^{j-1} F^l }
= F^{pn-1} \cap \bigsqcup_{l=0}^{j-1} F^l .
\]
By the disjointness of the different $F^k$, and that $j \le n-1$, 
we know that $pn-1 \in \set{0,\cdots,j-1} \subseteq \set{0,\cdots,n-2}$. But this would imply $pn \in \set{1,\cdots,n-1}$, which is
disjoint from $n\Z$, a contradiction. Thus, such an $x$ cannot exist, and so $E$ must induce an $n$-chain.

Finally, we estimate the measure of the $n$-chain induced by $E$, by using \eqref{eqn1} together with the fact that $F$ induces an
$m$-chain for $m \ge n$, and the fact that $\mu(F) \le 1/m < \varepsilon / (n-1)$, and find that it satisfies the theorem:
\begin{align*}
\mu\p{ \bigcup_{j=0}^{n-1} T^{-j} E }
&=   \mu \p{ \bigcup_{j=0}^{n-1} \bigcup_{k=1}^\infty T^{-j} F^{kn-1} }
 \ge \mu \p{ \bigcup_{j=0}^{n-1} \bigcup_{k=1}^\infty F^{kn-1+j} }\\
&=   \mu \p{ \bigcup_{i=n-1}^\infty F^i } 
 = 1 - \mu \p{\bigcup_{i=0}^{n-2} F^i}
 = 1 - \sum_{i=0}^{n-2} \mu(T^{-i} F) \\
&= 1-(n-1) \mu(F) 
\ge 1 - (n-1) \frac{\varepsilon}{n-1} = 1-\varepsilon .
\end{align*}
Thus, for arbitrary $n \in \N$ and $\varepsilon>0$, there exists a set $E$ inducing an $n$-chain which covers all but a 
subset of measure $\varepsilon$ of the space $X$.
\end{proof}

\section{Connections}
\label{sec3}
The contexts of the separable metric space of the Rokhlin lemma 
of Heinemann and Schmitt \cite{Heinemann00} and of our version for shuffleable space
are not so dissimilar as they may originally seem; 
a system on a separable metric space equipped also with completeness, a non-atomic measure, and without isolated points
must be conjugate to a shuffleable system. The connection develops through the following proposition, originally that of 
K.\ Kuratowski~\cite[p.~440]{Kuratowski} but refined into a shape more useful for our purposes by H.L.\ Royden~\cite{Royden88}.
\begin{proposition}
\label{prop:Royden}
\cite[Prop.~15.8]{Royden88}
Let $X$ be a complete separable metric space without isolated points, and let $\mathcal{I}$ be the set of 
irrational points in the unit interval. Then there is a one-to-one continuous map $\varphi$ of $\mathcal{I}$
onto $X$ such that $\varphi(O)$ is an $F_\sigma$ set in $X$ for each open subset $O$ of $\mathcal{I}$.
\end{proposition}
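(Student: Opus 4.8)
The plan is to reduce the statement to a construction on the Baire space and then to realize the map by a Cantor--Lusin scheme, following the classical idea of Kuratowski. First I would invoke the standard fact that the continued-fraction expansion furnishes a homeomorphism $\psi$ of $\mathcal{I}$ onto the Baire space $\N^\N$ (sending an irrational to the sequence of its partial quotients). Since $\psi$ is a homeomorphism, it carries open sets to open sets, so it suffices to produce a one-to-one continuous surjection $\varphi_0 : \N^\N \to X$ for which $\varphi_0(N_s)$ is $F_\sigma$ in $X$ for every basic cylinder $N_s = \set{\alpha \in \N^\N : s \prec \alpha}$, $s \in \N^{<\N}$, and then to set $\varphi = \varphi_0 \circ \psi$. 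The $F_\sigma$ clause of the proposition is then automatic: every open $O \subseteq \N^\N$ is a countable union of the clopen cylinders $N_s$, so $\varphi_0(O)$ is a countable union of the $F_\sigma$ sets $\varphi_0(N_s)$, hence $F_\sigma$.

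Fixing a complete metric $d \le 1$ compatible with the topology of $X$ (available since $X$ is Polish), I would construct a scheme of subsets $(A_s)_{s \in \N^{<\N}}$ satisfying $A_\emptyset = X$; each $A_s$ a nonempty $F_\sigma$ set with $\operatorname{diam}(A_s) \le 2^{-\abs{s}}$; and, for every $s$, an exact disjoint decomposition $A_s = \bigsqcup_{i \in \N} A_{s^\frown i}$. Granting such a scheme, I define $\varphi_0(\alpha)$ to be the unique point of $\bigcap_n A_{\alpha\restriction n}$ whenever this intersection is nonempty. The diameter control forces the intersection to contain at most one point and identifies $\varphi_0(N_s) = A_s$, yielding both continuity and the desired $F_\sigma$ images; the disjointness of children gives injectivity, since distinct $\alpha,\beta$ first diverge at some coordinate and thereafter lie in disjoint pieces; and surjectivity is immediate, because each $x \in X$ lies in exactly one child at every level, tracing out its own branch with $\bigcap_n A_{\alpha\restriction n} = \set{x}$.

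The hypothesis that $X$ has no isolated points enters when I split a piece: a nonempty $F_\sigma$ subset of a perfect Polish space is infinite and perfect, and covering it by countably many small $d$-balls and disjointifying by differences (which remain $F_\sigma$, being intersections of an open and a closed set) produces infinitely many nonempty $F_\sigma$ children of prescribed small diameter. This is what lets every node branch over the whole index set $\N$ with no empty child and, crucially, no node ever forced down to a single point; without perfectness one would be driven to singleton children, breaking the identification of the domain with $\N^\N$.

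The main obstacle is totality: because the pieces are only $F_\sigma$ and not closed, a nested branch may limit onto an excluded accumulation point and so have empty intersection (the half-open interval $[a,b)$ is the model, with the branch approaching $b$ orphaned). A priori, then, $\varphi_0$ is defined and bijective only on the Borel set $P \subseteq \N^\N$ of convergent branches, giving a continuous bijection $P \to X$, and the delicate heart of the argument is to arrange the scheme so that $P$ may be taken homeomorphic to $\N^\N$ itself. I would control the choice of pieces so that the live set $P$ is a perfect, zero-dimensional, nowhere-locally-compact Polish space; the Alexandrov--Urysohn theorem then characterizes $\N^\N$ as the unique such space, giving a homeomorphism $h : \N^\N \to P$, and precomposing yields the total map $\varphi = \varphi_0 \circ h \circ \psi$ on $\mathcal{I}$. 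Simultaneously maintaining exact $F_\sigma$-partitioning, infinite branching, and a live set with exactly these three topological properties is the crux, and is precisely where the absence of isolated points is indispensable.
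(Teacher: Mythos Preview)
The paper does not prove this proposition: it is quoted from Royden \cite[Prop.~15.8]{Royden88}, with the underlying construction attributed to Kuratowski, and is invoked only as a black box in the proposition that follows. There is therefore no proof in the paper to compare your proposal against.

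On the sketch itself: the overall strategy---pass to the Baire space via continued fractions, build a Lusin scheme of nonempty $F_\sigma$ pieces with shrinking diameters and exact disjoint refinement, and read off the map along branches---is indeed the classical route, and you have correctly located the genuine difficulty in the totality of $\varphi_0$. Two points deserve tightening. First, the assertion that ``a nonempty $F_\sigma$ subset of a perfect Polish space is infinite and perfect'' is false as written (any singleton is closed, hence $F_\sigma$); what the argument actually needs, and what the textbook proofs arrange inductively, is that every piece produced by the scheme remain infinite, which requires care in how the disjointification is carried out and is precisely where the no-isolated-points hypothesis earns its keep. Second, your proposed repair via Alexandrov--Urysohn---show that the live set $P$ is Polish, perfect, zero-dimensional, and nowhere locally compact, then transport---is an interesting reformulation, but as you yourself concede it only relocates the difficulty: verifying those properties of $P$ for a generically built scheme is no easier than constructing a scheme with $P = \N^{\N}$ outright, which is what Royden and Kuratowski in fact do. Your outline is thus a faithful sketch of the classical argument with the decisive technical step acknowledged but left open.
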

\begin{proposition}
Let $\Sigma = (X,\mathscr{B}(X),\mu,T)$ be a measure-preserving dynamical system with $X$ a complete separable metric space without isolated points,
$\mathscr{B}(X)$ the Borel $\sigma$-algebra on $X$, $\mu$ a non-atomic probability measure, and $T$ surjective. 
Then $\Sigma$ is measure-theoretically conjugate to a shuffleable system on the measurable space $(\mathcal{I},\mathscr{B}(\mathcal{I}))$.
\end{proposition}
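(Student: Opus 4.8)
The plan is to transport the dynamical system $\Sigma$ across the Kuratowski--Royden map $\varphi \colon \mathcal{I} \to X$ from Proposition~\ref{prop:Royden}, and then verify that the pulled-back structure on $\mathcal{I}$ satisfies the definition of a shuffleable system. First I would set $S = \varphi^{-1} \circ T \circ \varphi$, which is a well-defined map $\mathcal{I} \to \mathcal{I}$ since $\varphi$ is a bijection onto $X$; I would pull back the measure by $\nu = \varphi^{-1}_* \mu$, i.e.\ $\nu(A) = \mu(\varphi(A))$ for $A \in \mathscr{B}(\mathcal{I})$. Because $\varphi$ is a continuous bijection sending open sets to $F_\sigma$ sets, it is a Borel isomorphism (its inverse is Borel measurable), so $\nu$ is a genuine Borel probability measure on $\mathcal{I}$, it is non-atomic because $\mu$ is and $\varphi$ is injective, and $S$ is $\mathscr{B}(\mathcal{I})$-measurable. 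Measure preservation of $S$ and surjectivity of $S$ follow formally from those properties of $T$ together with bijectivity of $\varphi$. This already exhibits $\varphi$ as a measure-theoretic conjugacy between $(\mathcal{I},\mathscr{B}(\mathcal{I}),\nu,S)$ and $\Sigma$.

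The remaining point, and the one requiring care, is that $(\mathcal{I},\mathscr{B}(\mathcal{I}),\nu,S)$ is actually \emph{shuffleable}, i.e.\ that $\mathscr{B}(\mathcal{I})$ is the $\sigma$-algebra generated by the intervals of $\mathcal{I}$ under some total order. Here the natural choice is the order $<$ that $\mathcal{I}$ inherits as a subset of $\R$. So I would argue that $\mathscr{B}(\mathcal{I}) = \mathcal{F}(<)$: the intervals $(a,b) \cap \mathcal{I}$ with $a,b$ rational form a countable base for the subspace topology on $\mathcal{I}$, hence generate $\mathscr{B}(\mathcal{I})$, and conversely each such interval is Borel, so the two $\sigma$-algebras coincide. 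This is a standard fact about subspaces of $\R$ and needs only a sentence or two.

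The main obstacle is really the measurability of $\varphi^{-1}$, i.e.\ confirming that $\varphi$ is a Borel isomorphism rather than merely a continuous bijection; this is where the $F_\sigma$ clause in Proposition~\ref{prop:Royden} does its work, since it says $\varphi$ carries a generating family of open sets to Borel sets, so $(\varphi^{-1})^{-1}(O) = \varphi(O) \in \mathscr{B}(X)$ for open $O \subseteq \mathcal{I}$, and measurability on generators extends to all of $\mathscr{B}(\mathcal{I})$. Once that is in hand, the pushforward measure $\nu$, the conjugated map $S$, and all four defining properties (measure space structure, non-atomicity, measurable endomorphism, interval-generated $\sigma$-algebra) transfer routinely, and the conjugacy $\varphi$ intertwines $S$ and $T$ by construction. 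I would close by remarking that this shows the Heinemann--Schmitt hypotheses (modulo completeness and absence of isolated points) are a special case of the shuffleable setting, so the Rokhlin lemma for shuffleable systems recovers a version of theirs up to conjugacy.
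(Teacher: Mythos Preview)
Your argument is correct and follows essentially the same route as the paper: invoke the Kuratowski--Royden bijection $\varphi$, use continuity for measurability of $\varphi$ and the $F_\sigma$ clause for measurability of $\varphi^{-1}$, then transport $\mu$ and $T$ to $\mathcal{I}$ and verify the shuffleable axioms via the inherited order on $\mathcal{I}\subset\R$. One caution on your closing remark: the paper explicitly notes after this proposition that it is \emph{not} clear the resulting system on $\mathcal{I}$ is nigh aperiodic, so the shuffleable Rokhlin lemma does not automatically recover the Heinemann--Schmitt conclusion via this conjugacy.
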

\begin{proof}
We have assumed all of the conditions of Proposition~\ref{prop:Royden}, so there exists a bijective map 
$\varphi : \mathcal{I} \to X$ satisfying all of its
conclusions. In particular, since $\varphi$ is continuous, it is also measurable according to the Borel $\sigma$-algebras 
$\mathscr{B}(X)$ and $\mathscr{B}(\mathcal{I})$, and since the image of any open set $O$ is a $F_\sigma$ set, we also know
that it is forward measurable (if $E \in \mathscr{B}(\mathcal{I})$, then $\varphi(E) \in \mathscr{B}(X)$).
Thus, both $\varphi$ and $\varphi^{-1}$ are measurable.

We then define a measure on $\mathcal{I}$ by $\mu_0(E) = \mu(\varphi(E))$ and dynamics on $\mathcal{I}$ by 
$T_0(x) = \varphi^{-1}(T(\varphi(x)))$. We know $T_0$ preserves the measure $\mu_0$ since $T$ originally did:
\[
\mu_0(T^{-1}_0 E) = \mu( \varphi \varphi^{-1} T^{-1}  \varphi(E) )
= \mu(T^{-1} \varphi(E) )= \mu(\varphi(E)) = \mu_0(E)
\]
Thus, it follows that $(\mathcal{I},\mathscr{B}(\mathcal{I}),\mu_0,T_0)$ forms a measure-preserving dynamical system. It admits
the natural total order $<$, which generates its topology and also $\mathscr{B}(\mathcal{I})$, and it is conjugate through $\varphi$ to 
$(X,\mathscr{B}(X),\mu,T)$ which is non-atomic so that $\mu_0$ must also be non-atomic also. Further, it must also be surjective
since $T$ is surjective;
for any $y \in \mathcal{I}$, there must be $x \in X$ such that $T(x) = \varphi(y)$, and since $\varphi$ is bijective, there exists
$z \in \mathcal{I}$ such that $\varphi(z) = x$, so that $\varphi^{-1} T \varphi(z) = T_0(z) = y$. Thus,
$(\mathcal{I},\mathscr{B}(\mathcal{I}),\mu_0,T_0)$ is in fact a shuffleable system which is conjugate to $\Sigma$.
\end{proof}
As is, it is not clear that this system is necessarily nigh aperiodic.
Since Proposition~\ref{prop:Royden} is agnostic to measure theoretic concerns, and its structure does not immediately
lend itself to any obvious criteria which can guarantee this.

In 2016, Avila and Candela proved a version of the Rokhlin lemma for free measure-preserving actions on atomless standard 
probability space. Their version of the Rokhlin lemma connects to the shuffleable Rokhlin lemma in the cyclic subcase.
To see this connection, a few definitions are necessary for context.
\begin{definition}
Two probability spaces $(X,\mathcal{F},\mu)$ and $(Y,\mathcal{G},\nu)$ are \textbf{isomorphic} if there exists 
a bimeasurable bijection $f : X \to Y$ such that both $f$ and $f^{-1}$ are measure-preserving maps. If
instead there exist subsets $E_X$ and $E_Y$ with $\mu(E_X) = \nu(E_Y) = 0$ 
such that $X \setminus E_X$ is isomorphic to $Y \setminus E_Y$, then we say $(X,\mathcal{F},\mu)$ and $(Y,\mathcal{G},\nu)$
are \textbf{isomorphic mod $0$}.
\end{definition}

\begin{definition} \cite[Definition~2.1]{Sinai17}
$(X,\mathscr{L},\mu)$ is a \textbf{standard probability space}\footnote{It is sometimes permitted that $X$ also include an
atomic component, but this is explicitly excluded by Avila and Candela's theorem.}
if $X$ is isomorphic mod $0$ to the interval $[0,1]$ with the 
standard Lebesgue measure $m$.
\end{definition}

\begin{theorem} \cite[Theorem~1.2]{Avila16}
Let $\varepsilon>0$ and let $n \in \N^d$ be a $d$-tuple of positive integers. 
Then for every measure-preserving action $f$ of the monoid of $d$-tuples of nonnegative integers $\N_0^d$ on
a standard probability space such that for any $k,l \in \N_0^d$, $k \neq l$,
\begin{equation} \label{freeaction}
\mu(\set{x \in X : f_k(x) = f_l(x)}) = 0
\end{equation}
there exists an $n$-tower for $f$ of measure at least $1-\varepsilon$.
\end{theorem}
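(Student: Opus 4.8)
The plan is to deduce this from the Rokhlin lemma for shuffleable systems by a change of coordinates when $d=1$, and to reduce the general case to that base case by induction on $d$.

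First I would normalise the underlying space. A non-atomic standard probability space $(X,\mathscr{L},\mu)$ is by definition isomorphic mod $0$ to $([0,1],m)$ with Lebesgue measure, and since $\Q\cap[0,1]$ is $m$-null it is in turn isomorphic mod $0$ to $(\mathcal{I},m|_{\mathcal{I}})$, where $\mathcal{I}=[0,1]\setminus\Q$ carries its usual total order; this $(\mathcal{I},\mathscr{B}(\mathcal{I}))$ is a shuffleable measurable space. Fix a bimeasurable measure-preserving bijection $\psi$ between conull subsets of $X$ and $\mathcal{I}$. Transporting the action, set $S_i=\psi\, f_{e_i}\,\psi^{-1}$; these are commuting measure-preserving maps on $\mathcal{I}$ (defined off a null set, which is harmless since the conclusion is a statement mod $0$) generating an $\N_0^d$-action $g$ with $g_k=\psi\, f_k\,\psi^{-1}$. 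Because $\psi$ preserves measure, freeness \eqref{freeaction} transports verbatim, so $m(\set{x:g_k x=g_l x})=0$ whenever $k\ne l$. It therefore suffices to produce an $n$-tower for $g$ on $\mathcal{I}$ and pull it back through $\psi^{-1}$.

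For $d=1$ the action is generated by a single map $T:=S_1$. The crucial point is that for Lebesgue measure on $\mathcal{I}$ the distribution function is $F_m(x)=x$, which is injective, so $m\!\genset{x,T^k x}=\abs{F_m(T^k x)-F_m(x)}=\abs{T^k x-x}$ vanishes exactly when $T^k x=x$. Hence freeness, taking $l=0$, says precisely that $m(\set{x:m\!\genset{x,T^k x}=0})=0$ for every $k\ge1$, i.e.\ $T$ is nigh aperiodic --- this is exactly the point left open by the remark following Proposition~\ref{prop:Royden}, and it is the passage to Lebesgue measure on $\mathcal{I}$, rather than the uncontrolled pushforward of Royden's map, that removes the flatness obstruction. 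Now the Rokhlin lemma for shuffleable systems applies with the integer $n_1$ and the given $\varepsilon$, giving $E\subseteq\mathcal{I}$ that induces an $n_1$-chain with $m\!\p{\bigsqcup_{j=0}^{n_1-1}T^{-j}E}>1-\varepsilon$; this is an $n$-tower for $g$, and $\psi^{-1}(E)$ is the required tower in $X$.

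The general case $d\ge2$ is where I expect the real obstacle. The naive induction --- tower in the $S_d$-direction by the $d=1$ case and then apply the $(d-1)$-dimensional result to the action of $S_1,\dots,S_{d-1}$ restricted to the base of that tower --- fails, because the base is not $S_i$-invariant for $i<d$ and so carries no restricted action; this is the familiar difficulty of higher-rank Rokhlin lemmas. I would expect it to be handled as in Avila and Candela \cite{Avila16}: build a set meeting almost every $\N_0^d$-orbit in a suitably separated way, use \eqref{freeaction} to force the separation, and then tile the box $\prod_i\set{0,\dots,n_i-1}$ of translates while controlling, at each of finitely many stages, the small amount of measure lost --- the quantitative $1-\varepsilon$ input coming from the $d=1$ case above. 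Making the tiling and the accompanying bookkeeping work for a non-invertible monoid action is the hard part, and it is this content that the paper imports from \cite{Avila16} rather than reproving, the contribution here being the clean $d=1$ derivation from the shuffleable Rokhlin lemma.
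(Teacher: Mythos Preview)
The paper does not prove this theorem at all: it is stated as a citation of \cite[Theorem~1.2]{Avila16} and used only for comparison with the shuffleable Rokhlin lemma. What the paper does do, in the proposition immediately following and the closing sentence of Section~\ref{sec3}, is exactly your $d=1$ argument: pass via the mod~$0$ isomorphism with $[0,1]$ (or a full-measure subset thereof) to a completed shuffleable system, observe that with this order the distribution function is injective so that aperiodicity coincides with nigh aperiodicity, and then invoke the shuffleable Rokhlin lemma. Your $d=1$ step is thus essentially the paper's own derivation.

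For $d\ge 2$ the paper offers nothing beyond the citation, and you are right that the higher-rank case cannot be obtained by the na\"ive induction you describe; that content genuinely lives in \cite{Avila16}. So your proposal is accurate as a description of what can be extracted from this paper --- the $d=1$ case --- but it is not, and cannot be, a proof of the stated theorem in full, and the paper makes no claim to provide one either.
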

When we assume $d=1$, this reduces to a measure-preserving transform of an atomless standard probability space,
the more traditional form of the Rokhlin lemma, and the condition \eqref{freeaction} reduces to aperiodicity.

The context of a standard probability space is very similar to that of a shuffleable system, but not identical; 
the standard probability space must be on a complete $\sigma$-algebra, while shuffleable systems are equipped with a
Borel $\sigma$-algebra instead, which is not complete.
We therefore may extend our definition of shuffeable spaces to $\sigma$-algebras which are complete.
\begin{definition}
We say $\Sigma=(X,\mathscr{L}(<),\mu,T)$ is a \textbf{completed shuffleable system} if it is a probability
measure-preserving dynamical system where $\mathscr{L}$ is the completion of $\mathcal{F}(<)$ according to $\mu$,
$T$ is a measurable endomorphism, and $\mu$ is non-atomic.
\end{definition}

\begin{proposition}
If $(X,\mathscr{L}(X),\mu,T)$ is an aperiodic measure-preserving dynamical system on a
standard probability space, then there exists a full-measure subset $Y \subseteq X$  on which 
$(Y,\mathscr{L}(Y),\mu|_Y,T)$ is a nigh aperiodic completed shuffleable system.
\end{proposition}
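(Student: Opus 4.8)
The plan is to transport the linear structure of the model interval $[0,1]$ onto $X$ along the isomorphism mod $0$ guaranteed by the definition of a standard probability space, after first shrinking $X$ to a genuinely forward-invariant set so that ``mod $0$'' introduces no friction.

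First I would fix an isomorphism mod $0$ $\varphi : X \setminus E_X \to [0,1] \setminus E_Y$ between $(X,\mathscr{L}(X),\mu)$ and $([0,1],\mathscr{L}([0,1]),m)$, where $E_X, E_Y$ are null and $\varphi, \varphi^{-1}$ are measurable and measure-preserving. By aperiodicity, for each $n \ge 1$ the set $P_n := \set{x : T^n x = x}$ is $\mu$-null; set $N_0 := E_X \cup \bigcup_{n \ge 1} P_n$ and $N := \bigcup_{k \ge 0} T^{-k} N_0$. Because $T$ is measure-preserving, $N$ is null, so $Y := X \setminus N$ has full measure. A short check shows $Y$ is forward-invariant, i.e.\ $T(Y) \subseteq Y$, and that $Y$ is disjoint from every $P_n$. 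Hence $T$ restricts to a measure-preserving endomorphism of $(Y,\mathscr{L}(X)|_Y,\mu|_Y)$, and $\mu|_Y$ is non-atomic since $\mu$ is.

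Next I would define the order on $Y$ by $x < x' \iff \varphi(x) < \varphi(x')$ and verify that $(Y,\mathscr{L}(Y),\mu|_Y,T)$ is a completed shuffleable system, i.e.\ that the $\mu|_Y$-completion of the interval $\sigma$-algebra $\mathcal{F}(<)$ is exactly the trace $\sigma$-algebra $\mathscr{L}(Y) = \mathscr{L}(X)|_Y$. This I would split into three steps: (i) every $<$-interval of $Y$ has the form $Y \cap I$ for a real interval $I$ (and every trace $Y \cap (a,b)$ of an open interval is a $<$-interval), so that $\mathcal{F}(<)$ corresponds under $\varphi$ to $\mathscr{B}([0,1])|_{\varphi(Y)}$; (ii) since $\varphi(Y)$ is $[0,1]$ minus a null set, hence $m$-measurable of full measure, the $m|_{\varphi(Y)}$-completion of $\mathscr{B}([0,1])|_{\varphi(Y)}$ is $\mathscr{L}([0,1])|_{\varphi(Y)}$; and (iii) $\varphi$ carries $\mathscr{L}(X)|_Y$ isomorphically onto $\mathscr{L}([0,1])|_{\varphi(Y)}$, so pulling (ii) back along $\varphi$ yields $\mathscr{L}(X)|_Y$. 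Step (i), which reconciles the intrinsic interval $\sigma$-algebra of the ordered subspace $Y$ with the trace of the ambient Borel $\sigma$-algebra, is the only place needing genuine care: one realizes a $<$-interval as $Y \cap I$ by taking $I$ to be its convex hull, with each endpoint made open or closed according to whether it belongs to the interval.

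Finally, nigh aperiodicity falls out almost for free. A direct computation with the definition of $F_{\mu|_Y}$, using that $\varphi$ is measure-preserving and $\varphi(Y)$ is co-null, gives $F_{\mu|_Y}(x) = \varphi(x)$ for every $x \in Y$. Hence for each $n \ge 1$ and each $x \in Y$ (where also $T^n x \in Y$), the formula relating $\mu\!\genset{\cdot,\cdot}$ to the distribution function gives $\mu|_Y\!\genset{x,T^n x} = \abs{F_{\mu|_Y}(T^n x) - F_{\mu|_Y}(x)} = \abs{\varphi(T^n x) - \varphi(x)}$, which vanishes if and only if $T^n x = x$ because $\varphi$ is injective on $Y$. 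But $Y$ was built disjoint from $P_n$, so in fact $\set{x \in Y : \mu|_Y\!\genset{x,T^n x} = 0}$ is empty; in particular it is $\mu|_Y$-null for every $n \ge 1$, which is exactly nigh aperiodicity. I expect the main obstacle to be step (i) above, together with keeping the bookkeeping of the ``mod $0$'' null sets consistent so that the restricted system is honestly, not merely almost, invariant.
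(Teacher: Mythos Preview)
Your approach is essentially the same as the paper's: transport the order from $[0,1]$ via the isomorphism mod $0$, saturate the exceptional null set under preimages to obtain a forward-invariant full-measure $Y$, and deduce nigh aperiodicity from injectivity of the isomorphism together with aperiodicity of $T$. If anything you are more careful than the paper in two places---you explicitly verify that the $\mu|_Y$-completion of $\mathcal{F}(<)$ is $\mathscr{L}(Y)$ (the paper only remarks that intervals are measurable), and you pre-emptively excise the periodic points so that the nigh-aperiodicity set is genuinely empty rather than merely null.
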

\begin{proof}
Since $(X\mathscr{L}(X),\mu)$ is a standard probability space, there exists an isomorphism mod $0$
from $X$ to $[0,1]$,
$f : X \setminus E_X \to [0,1] \setminus E_I$. We can assume without loss of generality that 
$T(X \setminus E_X) = X \setminus E_X$; if necessary, we substitute $E'_X = \bigcup_{j=0}^{\infty} T^{-j} E_X$ for
$E_X$, and let $E'_I = [0,1] \setminus f(X \setminus E'_X)$.

The isomorphism mod $0$ between $X$ and $[0,1]$ then induces a total order $<_Y$ on $Y=X \setminus E_X$ via $f$, by
\[
a <_Y b \iff f(a) < f(b).
\]
By definition, $f$ is monotone according to the orders $<_Y$ and $<$.
Consequently, the 
intervals in $Y$ according to $<_Y$ are measurable since they are the preimages of intervals in $[0,1] \setminus E_I$ through
$f$, a bimeasurable map, and thus $<_Y$ generates $\mathcal{F}(<)$. In particular, $f(\genset{x,T^n x}) = \genset{f(x),f(T^n(x))}$,
so
$\mu(f(\genset{x,T^n x})) = m\!\genset{f(x),f(T^n(x))} = \abs{f(T^n(x)) - f(x)}$. Since $f$ is bijective, $f(T^n(x)) = f(x)$ iff
$T^n(x) = x$, and since $T$ is aperiodic, this only occurs on a set of zero measure. Therefore, in $Y$ according to the total
order $<_Y$,
\[
\mu\!\set{ x \in Y : \mu\!\genset{x,T^n x} = 0 } = 0
\]
and thus it follows that $T$ is nigh aperiodic on $Y$ also.
\end{proof}

Our proof of Rokhlin's lemma for shuffleable systems applies equally well to completed shuffleable systems;
applying this version of Rokhlin's lemma to $Y$ yields an alternative proof of Rokhlin's lemma for aperiodic dynamics on standard
probability spaces.



\begin{thebibliography}{99}

\bibitem{Avila06}
A.~Avila and J.~Bochi,
{\it A generic $C^1$ map has no absolutely continuous invariant probability measure},
{Nonlinearity,}
{\bf 19} (2006), arXiv:math/0605729.

\bibitem{Avila16}
A.~Avila and P.~Candela,
{\it Towers for commuting endomorphisms, and combinatorial applications},
Ann. Inst. Fourier (Grenoble),
{\bf 66} (2016),
1529--1544, arXiv:1507.07010.

\bibitem{Brown76}
J.~Brown,
{\it Ergodic theory and topological dynamics},
Academic Press,
1976.

\bibitem{Diaconis}
D.~Bayer and P.~Diaconis,
{\it Trailing the Dovetail Shuffle to its Lair},
{Ann. Appl. Probab.},
{\bf 2} (1992),
294--313.

\bibitem{Halmos56}
P.~Halmos,
{\it Lectures on Ergodic Theory},
{Amer. Math. Soc.},
1956.

\bibitem{Heinemann00}
S-M.~Heinemann and O.~Schmitt, 
{\it Rokhlin's Lemma for Non-Invertible Maps}, 
Dynam. Systems Appl., 
\textbf{10} (2000), 
201--214. 

\bibitem{Kuratowski}
K.~Kuratowski,
{\it Topology}, Vol. I.
Academic Press,
1966.

\bibitem{Rokhlin63}
V.~Rokhlin,
{\it Generators in ergodic theory},
{Vestnik Leningrad Gos. Univ.},
{\bf 1} (1963),
26-32.

\bibitem{Royden88}
H.~Royden,
{\it Real Analysis}, Third edition.
Macmillan New York, 
1988.

\bibitem{Sinai17}
I.~Sinai,
{\it Topics in Ergodic Theory (PMS-44)},
Princeton University Press, Princeton,
2017.

\end{thebibliography}
\end{document}